\DeclareMathOperator{\gph}{gph}
\DeclareMathOperator{\Sol}{Sol}
\DeclareMathOperator{\OP}{OP}
\DeclareMathOperator{\B}{\mathcal{B}}
\DeclareMathOperator{\Bo}{\mathbb{B}}
\DeclareMathOperator{\E}{\mathcal{E}}
\DeclareMathOperator{\Hd}{\mathcal{H}}
\DeclareMathOperator{\KKT}{KKT}
\DeclareMathOperator{\Ri}{\mathcal{R}}
\DeclareMathOperator{\Oo}{\mathcal{O}}
\DeclareMathOperator{\U}{\mathcal{U}}
\DeclareMathOperator{\Sa}{\mathbb{S}}
\DeclareMathOperator{\rank}{rank}
\DeclareMathOperator{\R}{\mathbb{R}}
\begin{document}
\title{On the solution existence and stability of polynomial optimization problems}
\author{Vu Trung Hieu}

\institute{Vu Trung Hieu \at Sorbonne Universit\'e, \textsc{CNRS},
\textsc{LIP6}, F-75005, Paris, France
	\at Division of Mathematics, Phuong Dong University, 171 Trung Kinh Street,
	Hanoi, Vietnam\\
	E-mail: trung-hieu.vu@lip6.fr}

\date{Received: date / Accepted: date}

\maketitle

\begin{abstract}
In this paper, we introduce and investigate a new regularity condition in the asymptotic sense for optimization problems whose objective functions are polynomial. The normalization argument in asymptotic analysis enables us to study the existence as well as the stability of solutions of these problems. We prove a Frank-Wolfe type theorem for regular optimization problems and an Eaves type theorem for non-regular pseudoconvex optimization problems. Moreover, under the regularity condition, we show results on the stability such as upper semicontinuity and  local upper-H\"{o}lder stability of the solution map of polynomial optimization problems. At the end of the paper, we discuss the genericity of the regularity condition.
\end{abstract}

\keywords{Polynomial optimization \and Regularity condition \and Asymptotic cone \and Frank--Wolfe type theorem \and Eaves type theorem \and Upper semicontinuity \and  Local upper-H\"{o}lder stability \and Genericity}

\subclass{90C30 \and 14P10}

\section{Introduction} We consider the following optimization problem
\begin{equation*}\label{f}
{\rm minimize} \ \ f(x)\ \ {\rm subject \ to } \ \ x\in K,
\end{equation*}
where $K$ is a nonempty, closed subset of $\R^n$ and $f: \R^n\to\R$ is a polynomial in $n$ variables of degree $d\geq 2$.
The problem and its solution set are denoted by $\OP(K,f)$ and $\Sol(K,f)$ respectively. Let $f_d$ be the homogeneous component of degree $d$ of $f$, and let $K_{\infty}$ be the asymptotic cone of $K$ that will be introduced in Section \ref{sec:pre}. We say that $\OP(K,f)$ is regular if the solution set of the asymptotic problem $\OP(K_{\infty},f_d)$ is bounded, and the problem is non-regular otherwise. The regularity condition has appeared in studies about the solution existence and stability in quadratic programming (see, e.g., \cite{LTY05,TamNghi18} and the references therein).

Asymptotic cones and functions play an important role in optimization and
variational inequalities \cite{AuTe2003}. The normalization argument in
asymptotic analysis enables us to study the existence and stability of solutions not only for quadratic programming, linear complementarity problems, and affine
variational inequalities (see, e.g., \cite{LTY05,Cottle}), but also for
polynomial complementarity problems and polynomial variational inequalities
that have unbounded constraint sets (see, e.g., \cite{Gowda,Hieu}). In this paper,
the normalization argument is used as the main technique to investigate the
existence as well as the stability of solutions to polynomial optimization problems.

In 1956, Frank and Wolfe \cite{FW56} proved that if $K$ is polyhedral and $f$
is quadratic and bounded from below on $K$, then $\Sol(K,f)$ is nonempty.
Several versions of the Frank-Wolfe theorem for quadratic, cubic, and
polynomial optimization problems have been shown in
\cite{LTY05,TamNghi18,LouZ1999,BeKla2002,Obuchowska2006,DHP2014,Klatte2018}.
Belousov and  Klatte \cite{BeKla2002}, and Obuchowska
\cite{Obuchowska2006} have proved Frank-Wolfe type theorems for convex and quasiconvex polynomial optimization problems. Recently, by
using a technique from semi-algebraic geometry, Dinh, Ha
and Pham \cite{DHP2014} have shown a Frank-Wolfe type theorem for
nondegenerate problems. The present
paper gives another Frank-Wolfe type theorem, which says that if
$\OP(K,f)$ is regular and $f$ is bounded from below on $K$, then the problem
has a solution. Besides, the Eaves theorem \cite{Eaves71} provides us with another
criterion for the existence of solutions to quadratic optimization problems.
Extensions of this theorem for quadratically constrained quadratic problems
have been investigated in  \cite{LTY05,TamNghi18,KTY2012,NNS2020}. This paper
introduces an Eaves type theorem for non-regular pseudoconvex optimization
problems, where the constraint sets are convex.

Under the assumption that the constraint set $K$ is compact and semi-algebraic,
some stability and genericity results for polynomial optimization problems have
been shown by  Lee and Pham \cite{LP16}. If $K$ is compact, then its asymptotic
cone is trivial,
i.e., $K_{\infty}=\{0\}$; Hence that $\OP(K,f)$ satisfies
the regularity condition obviously. In the present paper, $K$ may be unbounded.
Under the regularity condition, we prove several local properties of the
solution map of polynomial optimization problems such as local boundedness and
upper semicontinuity. Furthermore, based on an error bound for a polynomial
system
in \cite{LMP14}, we prove the local upper-H\"{o}lder stability of the
solution map.

We denote by
$\R_d[x]$ the space of all polynomials of degree at most $d$ and by $\Ri_{d}$ the set of all polynomials $g$ of degree
$d$ such that $\OP(K,g)$ is regular. The set $\Ri_{d}$ is an open cone in
$\R_d[x]$. At the end of
this work, $K$ is defined by convex polynomials, we prove that $\Ri_{d}$ is generic
in $\R_d[x]$.

The organization of the paper is as follows.
Section \ref{sec:pre} gives a brief introduction to
asymptotic cones, polynomials, and the regularity condition.
Section~\ref{sec:exist} proves two criteria of the solution existence. Section
\ref{sec:stab} investigates properties of the solution map. The last section
discusses the genericity of the regularity condition.

\section{Preliminaries}\label{sec:pre}

Recall that the \textit{asymptotic cone} \cite{AuTe2003} of a nonempty closed subset $S$ in $\R^n$ is defined and denoted by
$$
S_{\infty}=\Big\{ v\in\R^n:\exists t_k\to +\infty, \exists x_k\in S \text{ with } \lim_{k\to\infty} \frac{x_k}{t_k}=v\Big\}.
$$
Clearly, the cone $S_{\infty}$ is closed and contains $0$. The set $S$ is bounded if and only if $S_{\infty}$ is trivial. Furthermore, if $S$ is convex then $S_{\infty}$ is a closed convex cone and $S_{\infty}=0^+S$, where $0^+S$ is the \textit{recession} cone of $S$, that consists of all vectors $v \in\R^n$ such that $x+tv\in S$ for any $x\in S$ and $t\geq 0$. Thus, one has $S=S+S_{\infty}$ when $S$ is convex.

Let $d\geq 2$ be given. The dimension of the space $\R_d[x]$ is finite; its dimension is denoted by $\rho$. Let  $X(x)$ be the vector consisting of $\rho$ monomials of degree at most $d$ which is listed by lexicographic ordering
\begin{equation*}\label{monomials}
X(x):= (1,x_1,x_2,\dots,x_n, x_1^2, x_1x_2,\dots,x_1x_n,\dots,x_1^d,x_1^{d-1}x_2,\dots,x_n^d)^T.
\end{equation*}
For every $g\in\R_d[x]$, there exists a unique vector $a=(a_1,\dots,a_{\rho})\in\R^{\rho}$
such that $g(x)= a^TX(x)$. We denote by $\|g\|$ the $\ell_2$--norm of the polynomial $g$, namely
$$\|g\|:=\|a\|=\sqrt{a_1^2+\dots+a_{\rho}^2}.$$
The Cauchy--Schwarz inequality yields
$|g(x)|\leq \|X(x)\|\|g\|.$
Furthermore, if $\{g^k\}$ is a convergent sequence in $\R_d[x]$ with $g^k\to g$, then $g^k_d\to g_d$.

Throughout the paper, we assume that the constraint set $K\subset\R^n$ is
nonempty and closed, and the objective function $f: \R^n\to\R$ is a polynomial
of degree $d\geq 2$.

We say that $\OP(K,f)$ is a \textit{polynomial optimization problem} if $K$ is given by polynomials. With the given set $K$ and the given integer $d$, the solution map of polynomial optimization problems $\OP(K,g)$, where $g\in \R_d[x]$, is defined by
\begin{equation*}\label{Sol}
	\Sol_K(\cdot):\R_d[x]\rightrightarrows \R^n, \ \ g\mapsto \Sol(K,g).
\end{equation*}

Assume that $g\in \R_d[x]$ with $\deg g=d$ and $g=g_d+\dots+g_{1}+g_0$, where $g_{l}$ is a homogeneous polynomial of degree $l$, i.e., $g_{l}(tx)=t^{l}g_{l}(x)$ for all $t\geq 0$ and $x\in\R^n$, $l\in[d]:=\{1,\dots,d\}$, and $g_0\in\R$. Then, $g_d$ is the \textit{leading term} (or the recession polynomial) of the polynomial $g$ (of degree $d$). Clearly, one has
$$g_d(x)=\lim_{\lambda\to+\infty}\frac{g(\lambda x)}{\lambda^d}, \ \forall x\in\R^n.$$

For the pair $(K,f)$, the asymptotic pair $(K_{\infty},f_d)$ is unique. The
asymptotic optimization problem $\OP(K_{\infty},f_d)$ plays a vital role in the
investigation of behavior of $\OP(K,f)$ at infinity. The following remarks
point out (without proof) the basic properties of the asymptotic problem.

\begin{remark}\label{sol_cone} Since $f_d$ is a homogeneous polynomial and
$K_{\infty}$ is a closed cone, the asymptotic optimization problem
$\OP(K_{\infty},f_d)$ has a solution if and only if $f_d$ is non-negative on
$K_{\infty}$.
\end{remark}

\begin{remark}\label{sol_cone0}
Assume that $\Sol(K_{\infty},f_d)$ is nonempty. Then,  this set is a closed
cone
with $0\in \Sol(K_{\infty},f_d)$. In addition, $\Sol(K_{\infty},f_d)$ coincides
with the zero set of $f_d$ in $K_{\infty}$, i.e.,
$$\Sol(K_{\infty},f_d)=\{x\in K_{\infty}: f_d(x)=0\}.$$
\end{remark}

Now, we introduce the regularity notion concerning the boundedness of the
solution set of $\OP(K_{\infty},f_d)$.

\begin{definition}\label{regular} The problem $\OP(K,f)$ is said to be \textit{regular} if $\Sol(K_{\infty},f_d)$ is bounded and \textit{non-regular} otherwise.
\end{definition}

Denote by $\E_{d}$ (resp., $\Oo_{d}$, $\U_{d}$) the set of all polynomials $g$
of degree $d$ such that $\Sol(K_{\infty},f_d)$ is the empty set (resp., the
trivial cone, an unbounded cone). Clearly, $\Ri_{d}=\E_{d}\cup\Oo_{d}$, and one
has the following disjoint union: \begin{equation}\label{Pd}
\R_d[x]=\R_{d-1}[x]\cup \E_{d}\cup\Oo_{d}\cup\U_{d}.
\end{equation}

\begin{remark} The boundedness of $\Sol(K_{\infty},f_d)$ implies that of $\Sol(K,f)$. Indeed, assume to the contrary that $\Sol(K,f)$ is unbounded. There exists an unbounded sequence $\{x^k\}\subset\Sol(K,f)$. Without loss of generality, we can assume
	$x^k$ is nonzero for all $k$, $\|x^k\| \to +\infty$,  and
	$\|x^k\|^{-1}x^k\to \bar x$ for some $\bar x\in\R^n$ with $\|\bar x\|=1$.
	Note that $f(x^k)= f^*$, where $f^*\in \R$ is the minimum of $f$ over $K$,
	for all $k$. By dividing the last equation by $\|x^k\|^d$ and letting $k\to
	+\infty$, we obtain $f_d(\bar x)= 0$. It follows that $\bar x \in
	\Sol(K_{\infty},f_d)$. Since $\bar x \neq 0$, the cone
	$\Sol(K_{\infty},f_d)$  is unbounded, which contradicts our assumption.
	Thus, the claim is proved.
\end{remark}

\begin{remark} We observe that the set $\Ri_{d}$ is nonempty. If $K$ is bounded then
$\Ri_{d}$ coincides with the set of all $g\in \R_d[x]$ such that $\deg g=d$.
Hence, we can suppose that $K$ is unbounded. Clearly, the cone $K_{\infty}$
also is unbounded. Let $\bar x \in K_{\infty}$ be nonzero. There exists
$l\in[n]$ such that $\bar x_l\neq 0$. Let us define a homogeneous polynomial of
degree $d$ as $f(x):=-(\bar x_l x_l)^d$. For any $t>0$, one has $t\bar x\in
K_{\infty}$, and $f(t\bar x)=-(\bar x_l^{2})^dt^d \to -\infty$ as $t\to
+\infty$. Then, $f$ is not bounded from below on $K_{\infty}$. This yields
$\Sol(K_{\infty},f)=\emptyset$ and $f\in \Ri_{d}$.
\end{remark}

\begin{example}\label{ex1}
	Consider the case that $n=1$, $K=\R$ and $d=2$. One has
	$\R_2[x]=\{a_2x^2+a_1x+a_0:(a_2,a_1,a_0)\in\R^3\}.$
	Since $K_{\infty}=\R$, an easy computation shows that
	 $\E_{2}=\{a_2x^2+a_1x+a_0:a_2< 0,a_1\in\R,a_0\in\R\}$, $\Oo_{2}=\{a_2x^2+a_1x+a_0:a_2> 0,a_1\in\R,a_0\in\R\}$, and $\Ri_{2}=\{a_2x^2+a_1x+a_0:a_2\neq 0,a_1\in\R,a_0\in\R\}$.
\end{example}

\section{Two criteria for the solution existence}\label{sec:exist}

We now introduce two criteria for the solution existence of $\OP(K,f)$. In the proofs, the normalization argument in asymptotic analysis plays a vital
role; meanwhile, the
semi-algebraicity of $K$ is not required.

\subsection{A Frank-Wolfe type theorem for regular problems}

The following theorem provides us a criterion for the solution existence of regular optimization problems.

\begin{theorem}[Frank-Wolfe type theorem]\label{thm:FW} If $\OP(K,f)$ is regular and $f$ is bounded from below on $K$, then its solution set is nonempty and compact.
\end{theorem}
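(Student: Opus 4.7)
\medskip

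My plan is to prove both nonemptiness and compactness of $\Sol(K,f)$ by the normalization-at-infinity argument, using the regularity condition to exclude unbounded minimizing (resp.\ solution) sequences.

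\textbf{Step 1: Sharpening the regularity condition.} First I would show that $f_d \geq 0$ on $K_\infty$. Take $w \in K_\infty$ and sequences $t_k \to +\infty$, $y_k \in K$ with $y_k/t_k \to w$. Decomposing $f = f_d + f_{d-1} + \dots + f_0$ and exploiting homogeneity gives
\begin{equation*}
\frac{f(y_k)}{t_k^d} \;=\; f_d\!\left(\tfrac{y_k}{t_k}\right) + \sum_{l=0}^{d-1} \frac{f_l(y_k/t_k)}{t_k^{d-l}} \;\longrightarrow\; f_d(w),
\end{equation*}
since the lower-order terms carry a vanishing factor $t_k^{l-d}$. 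The left-hand side is $\geq \varphi(f)/t_k^d \to 0$ by the lower boundedness assumption, so $f_d(w)\geq 0$. Since $0\in K_\infty$ with $f_d(0)=0$, this means $0$ is a minimizer of $f_d$ on $K_\infty$, and by Remark~\ref{sol_cone} together with regularity,
\begin{equation*}
\Sol(K_\infty,f_d)=\{0\}.
\end{equation*}

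\textbf{Step 2: A minimizing sequence must be bounded.} Pick $\{x_k\}\subset K$ with $f(x_k)\to \varphi(f)\in\R$. Suppose, for contradiction, $\|x_k\|\to +\infty$ (passing to a subsequence). Set $v_k:=x_k/\|x_k\|$ and, extracting again, assume $v_k\to v$ with $\|v\|=1$. Taking $t_k=\|x_k\|$ in the definition of $K_\infty$ gives $v\in K_\infty$. Now $\{f(x_k)\}$ is bounded (it converges), so
\begin{equation*}
f_d(v) \;=\; \lim_{k\to\infty} \frac{f(x_k)}{\|x_k\|^d} \;=\; 0,
\end{equation*}
by the same decomposition as in Step~1. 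Hence $v\in\Sol(K_\infty,f_d)$ with $\|v\|=1$, contradicting $\Sol(K_\infty,f_d)=\{0\}$. Therefore every minimizing sequence is bounded; extract a convergent subsequence $x_k\to x^*$, and closedness of $K$ together with continuity of $f$ yields $x^*\in K$ and $f(x^*)=\varphi(f)$, so $\Sol(K,f)\neq\emptyset$.

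\textbf{Step 3: Compactness.} Closedness of $\Sol(K,f)=\{x\in K:f(x)=\varphi(f)\}$ is immediate from the closedness of $K$ and the continuity of $f$. For boundedness, any sequence in $\Sol(K,f)$ is in particular a minimizing sequence, so the exact argument of Step~2 applies and rules out $\|x_k\|\to +\infty$.

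\textbf{Anticipated obstacle.} The only delicate point is Step~2: one must verify that the normalized limit $v$ genuinely lies in $K_\infty$ and that $f_d(v)=0$, which relies on controlling the lower-order components $f_l$ ($l<d$) at the rate $\|x_k\|^{-(d-l)}$. Once this asymptotic identity and the fact $\Sol(K_\infty,f_d)=\{0\}$ are in hand, the contradiction $\|v\|=1$ vs.\ $v=0$ closes the proof cleanly.
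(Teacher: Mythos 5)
Your proposal is correct and follows essentially the same route as the paper: establish $\Sol(K_\infty,f_d)=\{0\}$ from regularity plus lower boundedness, then use the normalization $x_k/\|x_k\|\to v\in K_\infty$ and division by $\|x_k\|^d$ to produce a unit-norm asymptotic minimizer, contradicting $\Sol(K_\infty,f_d)=\{0\}$. The only cosmetic difference is that the paper packages the second step as a proof of coercivity of $f$ on $K$, whereas you phrase it as boundedness of minimizing (and solution) sequences; the underlying argument is identical.
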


\begin{proof}
Suppose that $f\in \Ri_{d}$, i.e. $f\in \E_{d}\cup\Oo_{d}$,  and there exists
$\gamma\in\R$ such that $\gamma\leq f(x)$, for all $x\in K$. For any given
$v\in K_{\infty}$, there are two sequences $\{t_k\}\subset\R_+$ and
$\{x^k\}\subset K$ such that $t_k\to +\infty$ and $t_k^{-1}x^k\to v$ as $k\to
+\infty$. For any $k$, one has	$ \gamma\leq f(x^k)$. Dividing both sides of
the last inequality by $t_k^d$ and letting $k\to +\infty$, we obtain $0\leq
f_d(v)$. Thus, $f_d$ is non-negative over $K_{\infty}$. It follows from
Remark~\ref{sol_cone} that $f$ does not belong to $\E_{d}$; hence, we conclude
that $f$ must be in $\Oo_{d}$.

Let $\bar x \in K$ be given, and
$M:=\{x\in K: f(x)\leq f(\bar x)\}$. It is easy to
check that $\Sol(M,f)=\Sol(K,f)$. Hence, we need only to prove that $\Sol(M,f)$
is nonempty and compact.

Clearly, $M$ is closed. We claim that $M$ is bounded. On the contrary, we
suppose that there exists an
unbounded sequence $\{x^k\}\subset M$ such that
$x^k$ is nonzero for all $k$, $\|x^k\| \to +\infty$, and $\|x^k\|^{-1}x^k\to v$ for some $v\in\R^n$ with $\|v\|=1$.
One has
\begin{equation}\label{ffg}
\gamma \leq f(x_k) \leq f(\bar x),
\end{equation}
for all $k$. Dividing the values in \eqref{ffg} by $\|x^k\|^d$ and letting
$k\to +\infty$, we get $f_d(v)= 0$. This yields $v\in \Sol(K_{\infty},f_d)$.
Because of $v\neq 0$, $\Sol(K_{\infty},f_d)$ is unbounded. This contradicts our
assumption and, thus, the claim is proved.

The compactness of $M$ and Bolzano-Weierstrass' Theorem allow us to conclude
that $\Sol(M,f)$
is nonempty and compact. \qed
\end{proof}

\begin{remark}\label{rk:O} From the proof of Theorem \ref{thm:FW}, we see that
if $f$ is bounded from below on $K$ then $\Sol(K_{\infty},f_d)$ is nonempty,
i.e. $f\in \Oo_{d}\cup \U_{d}$. Hence, if $f\in \E_{d}$ then $\OP(K,f)$ has no
solution.
\end{remark}

\begin{corollary}
Assume that $f=\alpha_1x_1^d+\dots+\alpha_nx_n^d+p$ where $d$ is even, $\alpha_{\ell}>0$ for all $\ell \in [n]$,  $p$ is a polynomial with $\deg p<d$. Then, $\Sol(K,f)$ is nonempty and compact.
\end{corollary}

\begin{proof}Clearly, $f_d=\alpha_1x_1^d+\dots+\alpha_nx_n^d$ is non-negative
over $\R^n$. It follows that $f_d$ is also non-negative over $K_{\infty}$. From
Remarks \ref{sol_cone} and \ref{sol_cone0}, it is clear that
$\Sol(K_{\infty},f_d)$ is nonempty and
$$\Sol(K_{\infty},f_d)=\{x\in K_{\infty}: \alpha_1x_1^d+\dots+\alpha_nx_n^d=0\}=\{0\}.$$
This means that $f\in \Oo_{d}$. Clearly, $f$ is bounded from below on $K$, and
the condition of Theorem~\ref{thm:FW} holds.
Therefore, $\Sol(K,f)$ is nonempty and compact. \qed
\end{proof}

The following example illustrates Theorem \ref{thm:FW}, in which the constraint set is neither convex nor semi-algebraic.

\begin{example}
Consider the optimization problem $\OP(K,f)$, where the polynomial $f$ is given
by $f(x_1,x_2)=x^3_2-x_1x_2$  and the constraint set $K$ is given by
$$K=\{(x_1,x_2)\in\R^2: x_1\geq 0, x_2-x_1 \geq 0,  e^{x_1}-x_2 \geq 0\}.$$
Since $f_3(x_1,x_2)=x_2^3$ and $K_{\infty}=\{(x_1,x_2)\in\R^2: x_1 \geq
0,x_2-x_1\geq 0\}$,
	one has $\Sol(K_{\infty},f_3)=\{(0,0)\}$. According to Theorem~\ref{thm:FW}, $\Sol(K,f)$ is nonempty and compact.
\end{example}

\subsection{An Eaves type theorem for non-regular problems}

In this subsection, we investigate the solution existence of non-regular
optimization problems, where the objective functions are pseudoconvex on the
constraint sets.

Assume that $U$ is an open subset of $\R^n$. One says the polynomial $f$ is
\textit{pseudoconvex} on $U$ if, for any $x,y\in U$ such that $\langle \nabla
f(x),y-x\rangle\geq 0$, here $\nabla f$ is the gradient of $f$, we have
$f(y)\geq f(x)$. Recall that $f$ is pseudoconvex on $U$ if and only if $\nabla
f$ is pseudomonotone on $U$ \cite[Theorem 3.1]{Kara}, i.e. if, for any $x,y\in
U$ such that $\langle \nabla f(x),y-x\rangle\geq 0$, we have $\langle \nabla
f(y),y-x\rangle\geq 0$.

\begin{lemma}\label{lm_Minty} Assume that $K$ is convex and $f$ is pseudoconvex on an open set $U$ containing $K$. If $x^0\in \Sol(K,f)$, then $\langle \nabla f(x),x-x^0\rangle\geq 0$ for all $x\in K$.
\end{lemma}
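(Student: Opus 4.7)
The plan is to combine the classical first-order necessary optimality condition on a convex set (the Stampacchia variational inequality at $x^0$) with the characterization, recalled just before the lemma, that pseudoconvexity of $f$ on $K$ is equivalent to pseudomonotonicity of $\nabla f$ on $K$.

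First, I would derive the Stampacchia-type inequality $\langle \nabla f(x^0), x - x^0 \rangle \geq 0$ for every $x \in K$. Fix $x \in K$. By convexity of $K$, the segment $x^0 + t(x - x^0)$ lies in $K$ for every $t \in [0,1]$. Since $x^0 \in \Sol(K,f)$, the smooth (in fact polynomial) function $\psi(t) := f(x^0 + t(x - x^0))$ satisfies $\psi(t) \geq \psi(0)$ for $t \in [0,1]$, so $\psi'(0) \geq 0$. The chain rule gives $\psi'(0) = \langle \nabla f(x^0), x - x^0 \rangle$, which proves the claim.

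Second, I would apply pseudomonotonicity of $\nabla f$ on $K$, which is equivalent to pseudoconvexity of $f$ on $K$ by the cited result \cite[Theorem 4.4]{ALM2014}. Specialised to the pair $(x^0, x)$ with both points in $K$, the inequality $\langle \nabla f(x^0), x - x^0 \rangle \geq 0$ obtained in the previous step implies $\langle \nabla f(x), x - x^0 \rangle \geq 0$, which is exactly the conclusion of the lemma.

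There is no serious obstacle: the argument is just the assembly of two standard ingredients. The only mildly delicate point is justifying the passage to the directional derivative, but since $f$ is a polynomial this is immediate from the chain rule, so the proof reduces to a couple of lines.
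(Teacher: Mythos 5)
Your proof is correct and follows essentially the same route as the paper: establish the Stampacchia inequality $\langle \nabla f(x^0),x-x^0\rangle\geq 0$ at the minimizer and then transfer it to $x$ via the pseudomonotonicity of $\nabla f$. The only difference is that you prove the first-order condition directly by the directional-derivative argument, whereas the paper simply cites \cite[Proposition 5.2]{ALM2014}; either is fine.
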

\begin{proof} Since $f$ is pseudoconvex on the set $U$, the gradient $\nabla f$ is pseudomonotone on $U$. Suppose that $x^0\in \Sol(K,f)$, one has $\langle \nabla f(x^0),x-x^0\rangle\geq 0$ for all $x\in K$ (see, e.g., \cite[Proposition 5.2]{ALM2014}). The pseudomonotonicity of the gradient implies that $\langle \nabla f(x),x-x^0\rangle\geq 0$ for all $x\in K$.	The lemma is proved.  \qed
\end{proof}

\begin{theorem}[Eaves type theorem]\label{thm:Eaves} Assume that $K$ is convex
and $f$ is pseudoconvex on an open set containing $K$. If $\OP(K,f)$ is
non-regular, then the following statements are equivalent:
	\begin{description}
		\item[\rm(a)] If $v\in\Sol(K_{\infty},f_d)\setminus\{0\}$, then there exists $x\in K$ such that $\left\langle \nabla f(x), v \right\rangle > 0$;
		\item[\rm(b)] $\Sol(K,f)$ is nonempty and compact.
	\end{description}
\end{theorem}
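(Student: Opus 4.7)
The plan is to combine the normalization argument in asymptotic analysis with the Minty-type inequality of Lemma~\ref{lm_Minty}. Throughout I use Remark~\ref{sol_cone}: non-regularity makes $\Sol(K_\infty,f_d)$ nonempty, so $f_d\geq 0$ on $K_\infty$ and $\Sol(K_\infty,f_d)=\{x\in K_\infty:f_d(x)=0\}$.

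For $(a)\Rightarrow(b)$, I would extract the following technical step. If $\{x^k\}\subset K$ satisfies $f(x^k)\leq c$ for some $c\in\R$ with $\|x^k\|\to\infty$, pass to a subsequence with $x^k/\|x^k\|\to v$, $\|v\|=1$; then $v\in K_\infty$, and dividing the inequality by $\|x^k\|^d$ and letting $k\to\infty$ yields $f_d(v)\leq 0$. Combined with $f_d\geq 0$ on $K_\infty$ this forces $f_d(v)=0$, so $v\in\Sol(K_\infty,f_d)\setminus\{0\}$. Hypothesis (a) supplies $\bar x\in K$ with $\langle\nabla f(\bar x),v\rangle>0$, whence $\langle\nabla f(\bar x),x^k-\bar x\rangle\to+\infty$, and pseudoconvexity at $\bar x$ forces $f(x^k)\geq f(\bar x)$ for all large $k$. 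In particular $f$ cannot tend to $-\infty$ along any unbounded sequence in $K$, and the bounded case is excluded by continuity, so $\varphi(f)>-\infty$. Applied to a minimizing sequence, the step shows that the sequence is either bounded---so a limit point belongs to $\Sol(K,f)$ by Bolzano--Weierstrass and closedness of $K$---or $\bar x$ itself is a minimizer because $\varphi(f)\geq f(\bar x)$. For compactness, suppose $\{y^k\}\subset\Sol(K,f)$ with $\|y^k\|\to\infty$; the same construction yields $\bar x\in K$ with $\langle\nabla f(\bar x),y^k-\bar x\rangle\to+\infty$, while Lemma~\ref{lm_Minty} applied at the minimizer $y^k$ gives $\langle\nabla f(\bar x),y^k-\bar x\rangle\leq 0$, a contradiction; closedness of $\Sol(K,f)$ is immediate.

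For $(b)\Rightarrow(a)$, given $v\in\Sol(K_\infty,f_d)\setminus\{0\}$, I would argue by contradiction assuming $\langle\nabla f(x),v\rangle\leq 0$ for every $x\in K$. Pick $x^0\in\Sol(K,f)$; since $K$ is convex and $K_\infty=0^+K$, the ray $\{x^0+tv:t\geq 0\}$ lies in $K$, and the scalar function $t\mapsto f(x^0+tv)$ has derivative $\langle\nabla f(x^0+tv),v\rangle\leq 0$, hence is nonincreasing and bounded below by $\varphi(f)=f(x^0)$. Therefore the whole ray is contained in $\Sol(K,f)$, contradicting its boundedness. The main obstacle is the compactness step of $(a)\Rightarrow(b)$: producing a minimizer from (a) does not by itself preclude an unbounded family of minimizers, and the decisive device is Lemma~\ref{lm_Minty}, whose Minty-type bound $\langle\nabla f(\bar x),y^k-\bar x\rangle\leq 0$ at each $y^k\in\Sol(K,f)$ meets the divergence furnished by (a) to close the argument.
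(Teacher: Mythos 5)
Your proof is correct, but the existence half of $(a)\Rightarrow(b)$ follows a genuinely different route from the paper's. The paper truncates: it minimizes $f$ over $K_k=\{x\in K:\|x\|\le k\}$, obtains minimizers $x^k$ by Weierstrass, and rules out unboundedness of $\{x^k\}$ by applying Lemma~\ref{lm_Minty} on each $K_k$ to get $\langle\nabla f(x),x-x^k\rangle\ge 0$, then dividing by $\|x^k\|$ to conclude $\langle\nabla f(x),v\rangle\le 0$ for \emph{every} $x\in K$, contradicting (a); the limit of the $x^k$ is then a minimizer. You instead work directly with sublevel/minimizing sequences: the normalization step identifies $v\in\Sol(K_\infty,f_d)\setminus\{0\}$, hypothesis (a) hands you a single point $\bar x$ with $\langle\nabla f(\bar x),v\rangle>0$, and pseudoconvexity at that one point yields $f(x^k)\ge f(\bar x)$ eventually, which simultaneously gives $\varphi(f)>-\infty$ and (when the minimizing sequence is unbounded) exhibits $\bar x$ itself as a minimizer. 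This avoids the truncation and uses Lemma~\ref{lm_Minty} only where it is really needed, namely for compactness of $\Sol(K,f)$, where your argument (divergence of $\langle\nabla f(\bar x),y^k-\bar x\rangle$ against the Minty bound at the minimizers $y^k$) coincides with the paper's after normalization. Your $(b)\Rightarrow(a)$ is also slightly more elementary: monotonicity of $t\mapsto f(x^0+tv)$ from the sign of its derivative replaces the paper's explicit appeal to pseudoconvexity, though both yield the unbounded ray of solutions. The only point worth making explicit in a write-up is the appeal to Remark~\ref{sol_cone} (non-regularity gives $\Sol(K_\infty,f_d)\neq\emptyset$, hence $f_d\ge 0$ on $K_\infty$) to upgrade $f_d(v)\le 0$ to $v\in\Sol(K_\infty,f_d)$; you do state this, and the paper leaves the same step implicit.
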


\begin{proof} Suppose that  $\OP(K,f)$ is
	non-regular. We prove $\rm(a) \Rightarrow \rm(b)$. Assume that $\rm(a)$
	holds.  For
	each $k\in\mathbb{N}$, we denote $$K_k=\{x\in\R^n:x\in K,\|x\|\leq k\}.$$
Clearly, $K_k$ is compact and convex. Without loss of generality, we can assume
that $K_k$
is nonempty. According to Bolzano-Weierstrass' Theorem, $\OP(K_k,f)$ has a
solution, denoted by $x^k$.

We assert that the sequence $\{x^k\}$ is bounded. Indeed, suppose on the contrary that $\{x^k\}$ is unbounded, $x^k\neq 0$ for all $k$, $\|x^k\| \to +\infty$, and $\|x^k\|^{-1}x^k\to v$, where $ v \in K_{\infty}$ and $\| v\|=1$.	For each $k$, one has
	\begin{equation}\label{f_Kk}
	f(x^k) \leq f(x), \ \forall x\in K_k.
	\end{equation}
Let $y\in K$ be given. For
$k$ large enough, $y\in K_{k}$ and $f(x^k) \leq f(y)$. By  dividing two sides
 of the last inequality by
$\|x^k\|^{d}$ and letting $k\to+\infty$, we obtain
	$f_{d}(v)\leq 0$. This leads to $ v\in\Sol(K_{\infty},f_{d})\setminus\{0\}$.
Furthermore, since $f$ is pseudoconvex on $K_k$, from Lemma
\ref{lm_Minty} we have
	\begin{equation}\label{nab_f}
	\langle \nabla f(y),y-x^k\rangle\geq 0.
	\end{equation}
Dividing both
	sides of the inequality in \eqref{nab_f} by $\|x^k\|$	and letting $k\to
	+\infty$, we obtain $\langle \nabla f(y),v\rangle\leq 0$. The conclusion
	holds for any $x\in K$, i.e., $\langle \nabla f(x),v\rangle\leq 0$ for all
	$x\in K$. This contradicts to our assumption.  Hence, $\{x^k\}$ is bounded.

We can assume that  $x^k\to \bar x$. From \eqref{f_Kk}, by the continuity
of $f$, it easy to check that  $\bar x$ solves $\OP(K,f)$, so $\Sol(K,f)$ is
nonempty.

	To prove the compactness of the solution set, we can repeat the previous argument by supposing that there is an unbounded solution sequence $\{x^k\}$, and can show that there exists $ v\in\Sol(K_{\infty},f_{d})\setminus\{0\}$ such that $\langle \nabla f(x),v\rangle\leq 0$ for all $x\in K$. This contradicts to $\rm(b)$.

	$\rm(b) \Rightarrow \rm(a)$ Since $K$ is convex, one has $K_{\infty}=0^+K$ and $K=K+K_{\infty}$. Suppose that $\Sol(K,f)$ is nonempty and compact, but $\rm(b)$ is wrong, i.e. there exists $v\in\Sol(K_{\infty},f_d)\setminus\{0\}$ such that $\langle \nabla f(x),v\rangle\leq 0$ for all $x\in K$. Let $\bar x$ be a solution of $\OP(K,f)$. For any $t\geq 0$, one has $\bar x+tv\in K$ and $\langle \nabla f(\bar x+tv),v\rangle\leq 0$. Thus, we have
	$$\langle \nabla f(\bar x+tv),\bar x-(\bar x+tv)\rangle\geq 0.$$
	The pseudoconvexity of $f$ yields  $f(\bar x)\geq f(x^0+tv)$. Hence,
	$x^0+tv$ belongs to $\Sol(K,f)$, for any $t\geq 0$. This shows that
	$\Sol(K,f)$ is unbounded which contradicts to our assumption. Thus $\rm
	(a)$ holds, and the proof is complete. \qed
\end{proof}

\begin{corollary}\label{cor:Eaves} Assume that $K$
is convex and $f$ is convex on an open convex set containing $K$. If $\OP(K,f)$
is non-regular, then the
following statements are equivalent:
	\begin{description}
		\item[\rm(a)] If $v\in\Sol(K_{\infty},f_d)\setminus\{0\}$, then there
		exists $x\in K$ such that $\left\langle \nabla f(x), v \right\rangle >
		0$;
		\item[\rm(b)] $\Sol(K,f)$ is nonempty and compact.
	\end{description}
\end{corollary}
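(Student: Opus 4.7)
The plan is to derive Corollary \ref{cor:Eaves} as an immediate specialization of Theorem \ref{thm:Eaves}. The only substantive thing to check is that, in the setting of the corollary, every hypothesis of Theorem \ref{thm:Eaves} is met; everything else is quoting that theorem verbatim.

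First I would observe that since $f$ is a polynomial, it is $C^1$ on $\R^n$, and a differentiable function that is convex on a convex set $K$ satisfies the classical gradient inequality
\begin{equation*}
f(y) \geq f(x) + \langle \nabla f(x), y-x\rangle \quad \text{for all } x,y\in K.
\end{equation*}
In particular, whenever $\langle \nabla f(x), y-x\rangle \geq 0$, we obtain $f(y) \geq f(x)$, which is exactly the definition of pseudoconvexity of $f$ on $K$ recalled just before Lemma \ref{lm_Minty}. Hence convexity of $f$ on the convex set $K$ implies pseudoconvexity of $f$ on $K$.

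Once this inclusion of function classes is noted, the proof is finished: the hypotheses of Corollary \ref{cor:Eaves} (convex $K$, convex $f$, non-regular $\OP(K,f)$) entail the hypotheses of Theorem \ref{thm:Eaves} (convex $K$, pseudoconvex $f$, non-regular $\OP(K,f)$), the statements $\rm(a)$ and $\rm(b)$ are the same in both places, and so both implications $\rm(a)\Rightarrow\rm(b)$ and $\rm(b)\Rightarrow\rm(a)$ (the latter under convexity of $K$, which holds by assumption) follow directly from Theorem \ref{thm:Eaves}.

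I do not expect a real obstacle here; the only thing to be careful about is to make the connection \emph{convex $\Rightarrow$ pseudoconvex} explicit, since the paper states pseudoconvexity in the differential form $\langle \nabla f(x), y-x\rangle \geq 0 \Rightarrow f(y)\geq f(x)$ rather than in terms of monotonicity, so it is cleanest to invoke the gradient inequality for convex differentiable functions rather than to argue through pseudomonotonicity of $\nabla f$.
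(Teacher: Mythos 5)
Your proposal is correct and is essentially identical to the paper's own proof, which simply invokes Theorem \ref{thm:Eaves} after noting that convexity implies pseudoconvexity; you merely make that implication explicit via the gradient inequality, which the paper leaves unstated.
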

\begin{proof}
	Since the convexity implies the pseudoconvexity, by applying Theorem \ref{thm:Eaves} for the convex polynomial $f$, we have the assertion. \qed
\end{proof}

The following example illustrates Corollary \ref{cor:Eaves}.

\begin{example}	Consider the polynomial optimization problem $\OP(K,f)$ with
the objective function $f(x_1,x_2)=\frac{1}{6}x_2^3+\frac{1}{2}x_1^2-x_1x_2$
and the constraint set $K=\{(x_1,x_2)\in\R^2: x_1x_2\geq 1, x_2\geq 2\}.$ The
gradient and the Hessian matrix of $f$, respectively, are given by
	$$\nabla f= \begin{bmatrix}
	x_1-x_2 \\
	-x_1+\frac{1}{2}x_2^2
	\end{bmatrix}, \ \ H=\begin{bmatrix}
	1& -1 \\
	-1 \ & \ x_2
	\end{bmatrix}.$$
	It is easy to check that $K$ is convex and $H$ is positive semidefinite on the open set $U=\{(x_1,x_2)\in\R^2: x_1x_2 > 0, x_2 > 1\}\supset K$; hence $f$ is convex on $K$. One has $K_{\infty}=\R^2_+$ and $f_3(x_1,x_2)=\frac{1}{6}x_2^3$. This yields $$\Sol(K_{\infty},f_3)=\{(x_1,x_2)\in\R^2: x_1\geq 0,x_2=0\}.$$
For every $v=(\alpha,0)$ in  $\Sol(K_{\infty},f_3)\setminus\{0\}$,  one has
$\alpha > 0$. By choosing the point $x=(3,2)$ in the constraint set, we get
$\left\langle \nabla f(x), v \right\rangle=\alpha > 0$. Finally, according to
Corollary
\ref{cor:Eaves}, the solution set of $\OP(K,f)$ is nonempty and compact.
\end{example}

\section{Stability of the solution map}\label{sec:stab}

We investigate the local
boundedness, the upper semicontinuity, and the local upper-H\"{o}lder stability
 of the solution map under the regularity condition.

\subsection{Upper semicontinuity of the solution map}

To prove the local
boundedness and the upper semicontinuity of the solution map, we need the following lemma.

\begin{lemma}\label{open_cone} The set $\Ri_{d}$ is open in $\R_{d}[x]$.
\end{lemma}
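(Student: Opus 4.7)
The plan is to show that the complement $\Po_d \setminus \Ri_K$ is sequentially closed. So I take a sequence $\{g^k\} \subset \Po_d \setminus \Ri_K$ converging to some $g \in \Po_d$, and aim to show $g \notin \Ri_K$, i.e.\ that $\Sol(K_\infty, g_d)$ is unbounded.

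By hypothesis, for each $k$ the set $\Sol(K_\infty, g^k_d)$ is unbounded, hence nonempty. By Remark \ref{sol_cone}, $\Sol(K_\infty, g^k_d)$ is a closed cone, $g^k_d \geq 0$ on $K_\infty$, and $\Sol(K_\infty, g^k_d) = \{x \in K_\infty : g^k_d(x) = 0\}$. Using the cone property together with unboundedness, I can pick $u^k \in \Sol(K_\infty, g^k_d)$ with $\|u^k\| = 1$; this gives $u^k \in K_\infty$ and $g^k_d(u^k) = 0$. Passing to a subsequence, I may assume $u^k \to u$ with $\|u\| = 1$; since $K_\infty$ is closed, $u \in K_\infty$, so in particular $u \neq 0$.

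Next I use the two facts from the preliminaries: the convergence $g^k \to g$ in $\Po_d$ implies $g^k_d \to g_d$, and (via the Cauchy--Schwarz inequality $|h(x)| \leq \|X(x)\|\,\|h\|$ applied to $h = g^k_d - g_d$ on the bounded sequence $\{u^k\}$) we get $g^k_d(u^k) \to g_d(u)$. Hence $g_d(u) = 0$. Similarly, for any fixed $x \in K_\infty$ we have $g^k_d(x) \geq 0$, and passing to the limit yields $g_d(x) \geq 0$; so $g_d \geq 0$ on $K_\infty$. Therefore $u$ attains the minimum of $g_d$ on $K_\infty$, i.e.\ $u \in \Sol(K_\infty, g_d) \setminus \{0\}$. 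By homogeneity of $g_d$ and the cone structure of $K_\infty$, the entire ray $\{tu : t \geq 0\}$ lies in $\Sol(K_\infty, g_d)$, which is therefore unbounded. Thus $g \notin \Ri_K$, proving that $\Po_d \setminus \Ri_K$ is closed and $\Ri_K$ is open.

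I do not anticipate a serious obstacle here: the only slightly delicate point is justifying the joint convergence $g^k_d(u^k) \to g_d(u)$, which follows from the Cauchy--Schwarz-type estimate already recorded in Section \ref{sec:pre}, combined with boundedness of $\{u^k\}$ and continuity of $g_d$. Everything else is a direct application of Remark \ref{sol_cone} and a normalization-plus-compactness argument of the same flavor as those used in the proofs of Theorems \ref{thm:FW} and \ref{thm:Eaves}.
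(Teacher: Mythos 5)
Your proof is correct and takes essentially the same route as the paper: both establish sequential closedness of $\Po_d\setminus\Ri_K$ by normalizing solutions of the asymptotic problems to unit vectors of $K_\infty$, passing to the limit via $g^k_d\to g_d$, and exhibiting a nonzero element of $\Sol(K_\infty,g_d)$. The only cosmetic differences are that you route the limit through the characterization of Remark \ref{sol_cone} (i.e.\ $g^k_d(u^k)=0$ and $g^k_d\ge 0$ on $K_\infty$) rather than through the minimality inequality $g^k_d(\|x^k\|v)\ge g^k_d(x^k)$ as the paper does, and that, like the paper, you gloss over the degenerate case where some $g^k$ has degree strictly less than $d$ --- harmless, since your final conclusion (unboundedness of $\Sol(K_\infty,g_d)$) yields $g\notin\Ri_K$ regardless of the degree of $g$.
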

\begin{proof} To prove the openness of $\Ri_{d}$, we only need to show that the complement $\R_{d}[x]\setminus\Ri_{d}$ is closed. Clearly, $\R_{d}[x]\setminus\Ri_{d}=\R_{d-1}[x]\cup\U_{d}$. Let $\{g^k\}$ be a sequence in $\R_{d}[x]\setminus\Ri_{d}$ such that $g^k\to g$. From the definition of $\Ri_{d}$, if $\deg g<d$, i.e., $g\in\R_{d-1}[x]$, then $g\in\R_{d}[x]\setminus\Ri_{d}$. Thus, we can suppose that $\deg g=d$. One has $g^k_d\to g_d$, here $g^k_d$ is the component of degree $d$ of $g^k$.

We now prove that $g$ belongs to $\U_{d}$. For each $k$, $\Sol(K_{\infty},g^k_d)$ is unbounded. There exists an unbounded sequence  $\{x^k\}$ such that $x^k\in\Sol(K_{\infty},g^k_d)$,  $\|x^k\|\to+\infty$, $\|x^k\|^{-1}x^k\to\bar x$ with $\|\bar x\|=1$. Let $v\in K_{\infty}$ be given. One has $\|x^k\|v\in K_{\infty}$ and
	$g^k_d(\|x^k\|v)\geq g^k_d(x^k),$
	for any $k$.
	Dividing the last inequality by $\|x^{k}\|^{d}$ and letting $k\to+\infty$, one has
	$g^k_d(v)\geq g^k_d(\bar x)$. This conclusion holds for every $v\in K_{\infty}$.
	This yields $\bar x \in \Sol(K_{\infty},g_d)$.
	As $\|\bar x\|=1$, we have $\bar x\neq 0$. It follows that $g$ belongs to $\U_{d}$.  The closedness of $\R_{d}[x]\setminus\Ri_{d}$ is proved. \qed

\if We now prove that the complement $\R_{d}[x]\setminus\Oo_{d}$ is closed. Let
$\{g^k\} \subset \R_{d}[x]\setminus\Oo_{d}$ be a sequence with $g^k\to g$. From
the second equation in \eqref{POR}, we prove that $g$ belongs to $\R_{d-1}[x]\cup
\E_{d}\cup\U_{d}$. There is a subsequence of $\{g^{k_p}\}$ such that it is
contained in $\R_{d-1}[x]$, $\E_{d}$, or $\U_{d}$. If $\{g^{k_p}\}\subset
\R_{d-1}[x]$ then, by the closedness of $\R_{d-1}[x]$, one has $g^{k_p}\to g\in
\R_{d-1}[x]$. Now we consider the case that  $\{g^{k_p}\}\subset
\E_{d}\cup\U_{d}$, i.e., $g^{k_p}$ is non-negative over
$K_{\infty}$, for any $p$. For any $v\in K_{\infty}$, one has $g_d^{k_p}(v)\geq
0$. Let $p \to +\infty$, we have $g_d(v)\geq 0$. Hence, $g_d$ is
non-negative over $K_{\infty}$. This means that $g\in \E_{d}\cup\U_{d}$.  The
openness of $\Oo_{d}$ is proved. \fi

\end{proof}

Recall that a set-valued map $\Psi:\R^m\rightrightarrows\R^n$ is
\textit{locally bounded} at $\bar u$ if there exists an open neighborhood $U$
of $\bar u$ such that $\cup_{u\in U} \Psi(u)$ is bounded \cite[Definition
5.14]{RW}. The  map $\Psi$ is  \textit{upper semicontinuous} at $\bar u\in T$
if for
any open set $V\subset \R^n$ such that $\Psi(\bar u)\subset V$ there exists a
neighborhood $U$ of $\bar u$ such that $\Psi(u)\subset V$ for all $u\in U$.
Recall that if $\Psi$ is closed, namely, the graph
$$\gph(\Psi):=\big\{(u,v)\in \R^m\times \R^n: v\in \Psi(u)\big\}$$
is closed in $\R^m\times \R^n$, and locally bounded at $u$, then $\Psi$ is
upper semicontinuous at $u$ \cite[Theorem 5.19]{RW}.

\begin{proposition}\label{prop:local} Assume that $K$ is convex. If $\OP(K,f)$
is regular, then the following statements hold:
	\begin{description}
		\item[\rm(a)] The solution map $\Sol_K(\cdot)$ is locally bounded at $f$, i.e., there exists $\varepsilon>0$ such that the set
		\begin{equation}\label{O_eps}
		O_{\varepsilon}:=\bigcup_{g\in \B(\varepsilon,d)} \Sol(K,f+g),
		\end{equation}
		where $\B(\varepsilon,d)$ is the open ball in $\R_{d}[x]$ with center $0$
		and radius $\varepsilon$, is bounded.
		\item[\rm(b)] The solution map $\Sol_K(\cdot)$ is upper semicontinuous at $f$.
	\end{description}
\end{proposition}

\begin{proof} $\rm(a)$ According to Lemma \ref{open_cone}, $\Ri_{d}$ is open in $\R_{d}[x]$. There is a closed ball $\overline\B(\varepsilon,d)$  such that
	\begin{equation}\label{PB}
	f+\overline\B(\varepsilon,d)\subset \Ri_{d}.
	\end{equation}
Assume to the contrary that $O_{\varepsilon}$ is unbounded. Then, there exists an unbounded sequence $\{x^k\}$ and a sequence $\{g^k\}\subset\B(\varepsilon,d)$ such that $x^k$ solves $\OP(K,f+g^k)$ with $x^k\neq 0$ for every $k$, $\|x^k\|\to+\infty$, and
	$\|x^k\|^{-1}x^k\to\bar x$ with $\|\bar x\|=1$.
	By the compactness of $\overline\B(\varepsilon,d)$, without loss of generality, we can assume that $g^k\to g$ with $g\in \overline\B(\varepsilon,d)$.

From assumptions, for every $k$, one has
	\begin{equation}\label{VI_k}
	(f+g^k)(y)\geq (f+g^k)(x^k),
	\end{equation}
for any $y\in K$. Let $y\in K$ be fixed and assume that $v\in K_{\infty}$. By the
	convexity
	of $K$, one has $y+\|x^k\|v\in K$ for any $k$. From \eqref{VI_k}, we
	conclude that
	$$(f+g^k)(y+\|x^k\|v)\geq (f+g^k)(x^k).$$
	Dividing this inequality by $\|x^k\|^{d}$ and taking $k\to+\infty$, we obtain
	$$(f+g)_d(v)\geq (f+g)_d(\bar x).$$
The conclusion hold for any $v\in K_{\infty}$. It follows that $\bar x\in\Sol\left(K_{\infty},(f+g)_d\right)$. Because of
	\eqref{PB}, $\Sol\left(K_{\infty},(f+g)_d\right)$ is contained in $\{0\}$,
	which contradicts to $\|\bar x\|=1$. Hence, $O_{\varepsilon}$ must be
	bounded.

	$\rm(b)$ It is not difficult to prove that the graph
	$$\gph(\Sol):=\big\{(g,x)\in\R_{d}[x]\times \R^n: x\in \Sol(K,g)\big\}$$
	is closed in $\R_{d}[x]\times\R^n$. Since $\Sol_K(\cdot)$ is locally bounded on $\Ri_{d}$, according to \cite[Theorem 5.19]{RW}, $\Sol_K(\cdot)$ is upper semicontinuous at $f$.  \qed
\end{proof}

\subsection{Local upper-H\"{o}lder stability of the solution map}

When the constraint set $K$ is convex and given by polynomials, we can
investigate the local upper-H\"{o}lder stability of the solution map under the
regular condition. To prove the stability, we need the
following lemma.

\begin{lemma}[\cite{LMP14}]\label{lm:Holder} Let $U$ be a semi-algebraic subset
in $\R^n$, represented by
	\begin{equation*}
	U = \left\lbrace x \in \R^n : u_i(x) = 0,i\in[l], v_j(x)\leq 0,j\in[m]\right\rbrace,
	\end{equation*}
	where $u_i(x),i\in[l],$  and $v_j(x), j\in[m],$ are polynomials. For any
	compact set $V\subset\R^n$, there are constants $c>0$ and $H>0$ such that
	$$d(x,U)\leq c\Big(\sum_{i=1}^{l}|u_i(x)|+\sum_{j=1}^{m}[v_j(x)]_+ \Big)^H,
	$$
	for all $x\in V$, here $[r]_+:=\max\{0,r\}$ and $d(x,U)$ the usual distance
	from $x$ to the set $U$.
\end{lemma}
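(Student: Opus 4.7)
The plan is to reduce the lemma to a classical Łojasiewicz-type inequality for semi-algebraic functions, namely the fact that if $h,\phi$ are continuous semi-algebraic functions on $\R^n$ and $V\subset\R^n$ is compact with $\{x\in V:\phi(x)=0\}\subset \{x\in V:h(x)=0\}$, then there exist constants $c>0$ and $H>0$ such that $|h(x)|\leq c\,\phi(x)^{H}$ for all $x\in V$. This is a standard consequence of the Tarski--Seidenberg principle together with the curve selection lemma (see, e.g., Bochnak--Coste--Roy), and it is the form in which the Łojasiewicz inequality is most convenient for error-bound questions.

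First I would introduce the residual function
\begin{equation*}
\phi(x):=\sum_{i=1}^{l}|u_i(x)|+\sum_{j=1}^{m}[v_j(x)]_+,
\end{equation*}
and verify that $\phi$ is continuous and semi-algebraic on $\R^n$: each $|u_i|$ is semi-algebraic because its graph is cut out by the polynomial equations $y^2=u_i(x)^2$ together with $y\geq 0$, and each $[v_j]_+$ is semi-algebraic because its graph is the union of $\{(x,0):v_j(x)\leq 0\}$ and $\{(x,v_j(x)):v_j(x)\geq 0\}$; finite sums preserve these properties. Second, I would check the obvious but crucial equivalence $\phi(x)=0\iff u_i(x)=0$ for all $i$ and $v_j(x)\leq 0$ for all $j$, i.e.\ $\phi^{-1}(0)=U$.

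Third, I would set $h(x):=d(x,U)$, which is continuous and semi-algebraic (the distance function to a semi-algebraic set is semi-algebraic, by elimination of quantifiers applied to the definition $d(x,U)=\inf\{\|x-y\|:y\in U\}$, or equivalently because $\gph(d(\cdot,U))$ is defined by a first-order formula over the reals). By construction $h(x)=0$ exactly on $U=\phi^{-1}(0)$, so the inclusion $\{\phi=0\}\subset\{h=0\}$ holds on any subset of $\R^n$, and in particular on the compact set $V$. Applying the Łojasiewicz inequality recalled above to the pair $(h,\phi)$ on $V$ yields constants $c>0$ and $H>0$ with $d(x,U)=h(x)\leq c\,\phi(x)^{H}$ for every $x\in V$, which is precisely the claim.

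The main obstacle, I expect, is not the deduction but making sure to quote the Łojasiewicz inequality in the exact form that allows the exponent $H$ to be extracted uniformly over the compact set $V$; once that classical result is in hand the argument is essentially a verification of semi-algebraicity and of the zero-set inclusion. One subtle point worth isolating is the semi-algebraicity of $d(\cdot,U)$ when $U$ is allowed to be unbounded, which is why the lemma restricts attention to a compact $V$: on $V$, the distance to $U$ is controlled by the distance to $U\cap W$ for some large compact $W$, and this truncation keeps us inside the semi-algebraic framework where Łojasiewicz applies.
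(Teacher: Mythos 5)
The paper does not prove this lemma at all: it is imported verbatim from the reference \cite{LMP14}, where the authors actually establish a sharper, quantitative version (with explicit exponents depending on the degrees and the number of variables) by techniques considerably more involved than what you use. Your argument proves only the qualitative statement as written here, and for that purpose it is correct and is the standard derivation: reduce to the classical \L{}ojasiewicz inequality for continuous semi-algebraic functions on a compact semi-algebraic set (if $\phi^{-1}(0)\subset h^{-1}(0)$ on such a set, then $|h|\leq c\,\phi^{H}$ there), applied to the residual $\phi$ and to $h=d(\cdot,U)$, whose zero sets both equal $U$. Three small points deserve attention. First, the statement implicitly requires $U\neq\emptyset$ (otherwise $d(x,U)=+\infty$); in the paper's application $U=\Sol(K,f)$ is assumed nonempty, but your proof should record this hypothesis since the \L{}ojasiewicz step needs $h$ to be a real-valued function. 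Second, the lemma allows an arbitrary compact $V$, whereas the \L{}ojasiewicz inequality is stated for closed bounded \emph{semi-algebraic} sets; this is repaired in one line by enclosing $V$ in a closed Euclidean ball and applying the inequality there. Third, your closing remark about truncating $U$ to $U\cap W$ is unnecessary: the distance function to any nonempty semi-algebraic set, bounded or not, is semi-algebraic on all of $\R^n$ by quantifier elimination, so no truncation is needed and the compactness of $V$ is used only as the domain on which \L{}ojasiewicz holds. With these minor repairs your proof is a complete and self-contained justification of the lemma as stated, though it does not recover the stronger result of \cite{LMP14}.
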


\begin{theorem}
Assume that $\OP(K,f)$ is regular and $K$ is a convex set given by
	\begin{equation*}\label{K}
	K = \left\lbrace x \in \R^n : p_i(x) = 0,i\in[l], q_j(x)\leq 0,j\in[m]\right\rbrace,
	\end{equation*}
where all $p_i,q_j$ are polynomials. If $\Sol(K,f)$ is nonempty, then the map $\Sol_K(\cdot)$ is locally upper-H\"{o}lder stable at $f$, i.e., there exist $\ell>0,H>0$ and $\varepsilon>0$ such that
	\begin{equation}\label{eq:Hol}
	\Sol(K,g)\subset \Sol(K,f)+\ell\|g-f\|^{H}\Bo,
	\end{equation}
	for all $g\in\R_{d}[x]$ satisfying $\|g-f\|<\varepsilon$, where $\Bo$ is the closed unit ball in $\R^n$.
\end{theorem}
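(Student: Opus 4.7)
The plan is to combine the local boundedness supplied by Theorem~\ref{thm:local}(a) with the semi-algebraic error bound in Lemma~\ref{lm:Holder}, applied to a suitable polynomial description of $\Sol(K,f)$. First I would write $\Sol(K,f)$ as the semi-algebraic set
$$\Sol(K,f)=\{x\in\R^n:p_i(x)=0,\,i\in[l],\ q_j(x)\leq 0,\,j\in[m],\ f(x)-\varphi(f)\leq 0\},$$
so that $\Sol(K,f)$ is described by the defining constraints of $K$ together with the single polynomial inequality $f(x)-\varphi(f)\leq 0$. By Theorem~\ref{thm:local}(a) there exists $\varepsilon_0>0$ such that the set $O_{\varepsilon_0}$ from \eqref{O_eps} is bounded; choosing $R>0$ with $O_{\varepsilon_0}\subset\{x:\|x\|\leq R\}$, the compact set $V:=\{x\in\R^n:\|x\|\leq R\}$ contains both $\Sol(K,f)$ (since $\Sol(K,f)=\Sol(K,f+0)\subset O_{\varepsilon_0}$) and $\Sol(K,g)$ for every $g\in\Po_d$ with $\|g-f\|<\varepsilon_0$.

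Next I would apply Lemma~\ref{lm:Holder} to $U:=\Sol(K,f)$ and to this compact $V$, obtaining constants $c,H>0$ such that
$$d(x,\Sol(K,f))\leq c\Bigl(\sum_{i=1}^{l}|p_i(x)|+\sum_{j=1}^{m}[q_j(x)]_++[f(x)-\varphi(f)]_+\Bigr)^H$$
for every $x\in V$. Fix any $x\in\Sol(K,g)$ with $\|g-f\|<\varepsilon_0$ and any reference point $y^{*}\in\Sol(K,f)$; both lie in $V$. Since $x\in K$, the first two sums vanish, so it remains to bound $[f(x)-\varphi(f)]_+$. Using the optimality of $x$ for $g$ and $y^{*}$ for $f$ I would write
$$f(x)-\varphi(f)=(f-g)(x)+\bigl(g(x)-g(y^{*})\bigr)+(g-f)(y^{*})\leq (f-g)(x)+(g-f)(y^{*}),$$
and then invoke the inequality $|h(z)|\leq\|X(z)\|\,\|h\|$ from Section~\ref{sec:pre} to conclude that $[f(x)-\varphi(f)]_+\leq 2M\|g-f\|$, where $M:=\sup_{z\in V}\|X(z)\|<\infty$. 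Substituting into the Hölder bound gives $d(x,\Sol(K,f))\leq c(2M)^H\|g-f\|^H$, so setting $\ell:=c(2M)^H$ and $\varepsilon:=\varepsilon_0$ yields the required inclusion \eqref{eq:Hol}.

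The main obstacle is essentially organizational: one must shrink the neighborhood of $f$ enough that the perturbed solution sets $\Sol(K,g)$ and a fixed reference point $y^{*}\in\Sol(K,f)$ lie simultaneously in a single compact set on which Lemma~\ref{lm:Holder} produces uniform constants $c,H$. This is exactly the content of Theorem~\ref{thm:local}(a), whose proof relies crucially on convexity of $K$; once that compact $V$ is in place, the bound on the excess $[f(x)-\varphi(f)]_+$ is a routine Cauchy--Schwarz computation, and the Hölder inclusion drops out with no further work.
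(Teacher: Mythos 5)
Your proposal is correct and follows essentially the same route as the paper: Theorem~\ref{thm:local}(a) supplies a compact set containing $\Sol(K,f)$ and all nearby $\Sol(K,g)$, Lemma~\ref{lm:Holder} applied to the polynomial description of $\Sol(K,f)$ gives the H\"{o}lder error bound, and the residual $f(x)-\varphi(f)$ is controlled by the three-term optimality decomposition plus Cauchy--Schwarz. The only cosmetic differences are that you describe $\Sol(K,f)$ with the inequality $f(x)-\varphi(f)\leq 0$ where the paper uses the equality $f(x)-\varphi(f)=0$ (equivalent on $K$), and you take $V$ to be a closed ball containing $O_{\varepsilon_0}$ rather than the closure of $O_{\varepsilon}$.
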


\begin{proof} Suppose $\Sol(K,f)$ is nonempty and its optimal value is $f^*$.
Since $\OP(K,f)$ is regular and $K$ is convex, according to Proposition
\ref{prop:local}, there exists $\varepsilon>0$ such that $\Sol(K,f)\subset
O_{\varepsilon}$, defined by \eqref{O_eps}, is bounded. Let $V$ be the closure
of
$O_{\varepsilon}$. It follows that $V$ is a nonempty compact set. By the
assumptions, we see that
	\begin{equation*}\label{SolHol}
	\Sol(K,f)=\{x \in \R^n : f(x)-f^*=0, p_i(x) = 0,i\in[l], q_j(x)\leq 0,j\in[m] \}.
	\end{equation*}
From this equality, by applying Lemma \ref{lm:Holder} for $U=\Sol(K,f)$ and the
compact set $V$, there are constants $c_0>0$ and $H>0$ such that
	\begin{equation}\label{dxK}
	d(x,\Sol(K,f))\leq c_0A(x)^H \ \ \forall x\in V,
	\end{equation}
where
$$A(x):=|f(x)-f^*|+\sum_{i=1}^{l}|p_i(x)|+\sum_{j=1}^{m}[q_j(x)]_+.$$

Let $g\in\R_{d}[x]$ be arbitrary given such that $\|g-f\|<\varepsilon$. From the	definition of $V$, $\Sol(K,f)$ and $\Sol(K,g)$ are subsets of $V$. Here, $\Sol(K,g)$ may be empty. By the compactness of $V$, we define the constant $L:=\max\{\|X(x)\|:x\in V\}$. Hence, one has
	\begin{equation}\label{g_f}
	|g(x)-f(x)|\leq \|X(x)\|\|g-f\|\leq L\|g-f\| \ \ \forall x\in V.
	\end{equation}

If $\Sol(K,g)$ is empty, then \eqref{eq:Hol} is obvious. Thus, we consider the
case that $\Sol(K,g)\neq\emptyset$. Since both $\Sol(K,f)$ and $\Sol(K,g)$ are
nonempty and compact, for any $x_g\in\Sol(K,g)$, there is $x_f\in \Sol(K,f)$
such that
	\begin{equation}\label{y_z}
	\|x_g-x_f\|=d(x_g,\Sol(K,f)).
	\end{equation}
Because of $p_i(x_g) = 0$ for $i\in[l]$ and $q_j(x_g)\leq 0$ for $j\in[m]$,
from the definition of $A(x)$, one has $A(x_g)=|f(x_g)-f^*|$. By \eqref{y_z}
and \eqref{dxK}, we see that
	$$\|x_g-x_f\|  \leq c_0A(x_g)^H=c_0|f(x_g)-f^*|^H.$$
	Since $x_f\in \Sol(K,f)$, we have $f(x_f)=f^*\leq f(x_g)$. Therefore, we obtain
	\begin{equation}\label{y_z2}
	\|x_g-x_f\|\leq c_0|f(x_g)-f^*|^H=c_0(f(x_g)-f(x_f))^H.
	\end{equation}
It follows from $x_g\in\Sol(K,g)$ that $g(x_g)-g(x_f)\leq 0$. Since $x_g,x_f\in V$, we conclude from  \eqref{g_f} that
	$$\begin{array}{ll}
	f(x_g)-f(x_f)   & = \; (f(x_g)-g(x_g))+(g(x_g)-g(x_f))+(g(x_f)-f(x_f))  \smallskip \\
	&\leq  \; (f(x_g)-g(x_g))+(g(x_f)-f(x_f)) \smallskip \\
	&\leq \; 2L \|g-f\|.
	\end{array}$$
	The inequality \eqref{y_z2} and the last result lead to
	$$\|x_g-x_f\|\leq c_0(2L)^H\|g-f\|^H,$$
	consequently,
	$$d(x_g,\Sol(K,f))=\|x_g-x_f\|\leq \ell\|g-f\|^H,$$
	where $\ell=c_0(2L)^H$.

The conclusion holds for any $x_g$ in $\Sol(K,g)$. 	Hence, the inclusion \eqref{eq:Hol} of the theorem is proved.  \qed
\end{proof}

\section{Genericity of the regularity condition}\label{sec:gen}

In this section, we discuss the genericity of the regularity condition of polynomial optimization problems.

A subset $A$ is called \textit{generic} in  $\R^m$ if $A$ contains a countable
intersection of dense and open sets in $\R^m$. If $A$ is generic in  $\R^m$ and
$A\subset B$ then $B$ also is generic in  $\R^m$. Let $\mathcal{T}$ be a
topological space. It is known that if $h: \R^m\to \mathcal{T}$ is a
homeomorphism and $A$ is generic in $\R^m$, then $h(A)$ is generic in
$\mathcal{T}$.

Let $U\subset \R^m$ be a semi-algebraic set. Then, there exists a decomposition
of $U$ into a disjoint union \cite[Theorem 2.3.6]{BCF98}, $U=\cup_{i=1}^sU_i$,
where each $U_i$ is semi-algebraically homeomorphic to $(0,1)^{d_i}$.
Here, let $(0,1)^{0}$ be a point,  $(0,1)^{k}\subset \R^{k}$ be the set of
points $x=(x_1,\dots,x_{k})$ such that $x_j\in (0,1)$ for all $j\in[k]$.
The \textit{dimension} of $U$ is defined by $\dim(U):=\max\{d_1,\dots,d_s\}.$
The dimension is well-defined and does not depends on the decomposition of $U$.
Recall that if $U$ is nonempty and $\dim(U)$ is zero, then
$U$ has finitely many points. Furthermore, if $\dim(\R^m\setminus U)<m$, then
$U$ is generic in $\R^m$ (see, e.g. \cite[Lemma 2.3]{DHP16}).

The space generated by all monomials of degree $d$ listed by lexicographic
ordering $\{x_1^d,x_1^{d-1}x_2,x_1^{d-1}x_3,\dots,x_n^d\}$ is denoted by
$\Hd_d$. One has the direct sum $\R_{d}[x]=\Hd_d \oplus \R_{d-1}[x]$. The dimension
of $\Hd_d$ is denoted by $\eta$. 	For every homogeneous polynomial $h\in
\Hd_d$, one has a unique $b\in \R^{\eta}$,
such that $h(x)=b^TX_d(x)$, where
$$X_d^T(x)=(x_1^d,x_1^{d-1}x_2,x_1^{d-1}x_3,\dots,x_n^d).$$
Here, $\nabla (b^TX_d(x))$ is the gradient vector of $b^TX_d(x)$ and
$D_{b}[\nabla (b^TX_d(x))]$ is the Jacobian matrix of $b^TX_d(x)$ with respect
to $b$.

\begin{lemma}\label{rank}  One has
	$\rank(D_{b}[\nabla (b^TX_d(x))])=n$ for all $x\in \R^n\setminus\{0\}$.
\end{lemma}

\begin{proof} In the proof, we are only interested in the monomials $x_i^{d-1}x_j$, where $i,j\in[n]$. Hence, for convenience, we rewrite $X_d^T(x)$ and $b^T$ respectively as follows:
$$(x_1^d,x_1^{d-1}x_2,\dots,x_1^{d-1}x_n; \ \dots; \
x_n^{d-1}x_1,x_n^{d-1}x_2,\dots,x_n^{d}; \ \dots)$$
and
$$(b_{11},b_{12},\dots,b_{1n}; \ \dots; \
b_{n1},b_{n2},\dots,b_{nn}; \ \dots).$$
	Then, we have
	\begin{equation}\label{h}
	b^TX_d(x)=\sum_{j\in [n]}b_{1j}x_1^{d-1}x_j+\dots+\sum_{j\in
	[n]}b_{nj}x_n^{d-1}x_j+Q,
	\end{equation}
	where $Q$ is a homogeneous polynomial of degree $d$.

From \eqref{h}, an easy computation shows that
	$$\frac{\partial (b^TX_d(x))}{\partial x_i}=db_{ii}x_i^{d-1}+(d-1)\sum_{j\neq i}b_{ij}x_i^{d-2}x_j +\sum_{j\neq i}b_{ji}x_j^{d-1}+\frac{\partial Q}{\partial x_i},$$
	and the $n\times \eta$-matrix  $D_{b}[\nabla (b^TX_d(x))]$ can be described as follows
	$$D_{b}[\nabla (b^TX_d(x))]=\Big[M_1,M_2,\cdots,M_n,\cdots\ \Big],$$
	where the submatrix $M_i$, for $i\in[n]$, is defined by
	$$M_i=\begin{bmatrix}
	x_i^{d-1}I_{i-1}&O_{i\times 1}&O_{i\times(n-i)} \smallskip \\
	L_{1\times (i-1)}& dx_i^{d-1}& R_{1\times(n-i)} \smallskip \\
	O_{i\times(n-i)} & \ O_{{(n-i)}\times 1} \ &  x_i^{d-1}I_{n-i}\\
	\end{bmatrix}$$
with $I_{k}$ being the unit $k\times k$-matrix, $O_{k\times s}$ being the zero
$k\times s$-matrix,
	$$L_{1\times (i-1)}=\Big((d-1)x_i^{d-2}x_1,\dots,(d-1)x_i^{d-2}x_{i-1}\Big),$$
	and
	$$R_{1\times (n-i)}=\Big((d-1)x_i^{d-2}x_{i+1},\dots,(d-1)x_i^{d-2}x_{n}\Big).$$
	We observe that $\det(M_i)=dx_i^{d(d-1)}$, for all $i\in[n]$. Since $x\neq 0$, there exists $l\in[n]$ such that $x_l\neq 0$. This implies that $\rank(M_l)=n$. Hence, the rank of $D_{b}[\nabla (b^TX_d(x))]$ is $n$, for any $x\neq 0$. \qed
\end{proof}

Suppose that $C$ is a polyhedral cone given by
\begin{equation}\label{K_0}
C = \left\lbrace x \in \R^n : Ax \geq 0\right\rbrace,
\end{equation}
where  $A=(a_{ij})\in {\mathbb R}^{p\times n}$. Let $\KKT(C,g)$, where
$g\in\R_{d}[x]$, be the set of the Karush-Kuhn-Tucker points of $\OP(C,g)$, i.e.,
$x\in \KKT(C,g)$ if and only if
there exists $\lambda\in \R^p$ such that
\begin{equation}\label{KKT_p}
\left\lbrace
\begin{array}{l}\nabla g(x)-A^T\lambda =0, \\
\lambda^T(Ax)=0, \; \lambda \geq 0 , \; Ax \geq 0. 	\end{array}\right.
\end{equation}
From the Karush-Kuhn-Tucker conditions, we see that $\Sol(C,g)\subset \KKT(C,g)$ for all $g\in\R_{d}[x]$.

For each index set $\alpha \subset [p]$, we associate the \textit{pseudo-face} $C_{\alpha}$ of $C$, which is denoted and defined by
$$C_\alpha:=\Big\{x\in {\mathbb R}^n\,:\, \sum_{j=1}^{n}a_{ij}x_j=0\ \,
\forall i\in\alpha,\ \, \sum_{j=1}^{n}a_{ij}x_j> 0\ \,\forall i\in[p]\setminus\alpha\Big\},$$
where $a_{ij}$ is the element in the $i$-th row and the $j$-th column of $A$. The number of pseudo-faces of $C$ is finite. These pseudo-faces  establish a disjoint decomposition of $C$. So, we obtain
\begin{equation}\label{Sol_decom}
\KKT(C,g)=\displaystyle\bigcup_{\alpha\subset [p]}\left(\KKT(C,g)\cap C_{\alpha}\right) ,
\end{equation}

The following proposition shows that the Karush-Kuhn-Tucker set-valued map of homogeneous polynomial optimization problems
$$\KKT_C:\R^{\eta}\rightrightarrows\R^n, \ b\mapsto \KKT_C(b)=
\KKT(C,b^TX_d(x)),$$
is finite-valued, i.e., the cardinal $\#\KKT_C(b)$ is finite, on a generic semi-algebraic set of $\R^{\eta}$.

\begin{proposition}\label{generic_1} Assume that $C$ is a polyhedral cone given
by
\eqref{K_0} and the matrix $A$ is full rank. Then, there exists a  generic
semi-algebraic set $\Sa\subset\R^{\eta}$ such that $\#\KKT_C(b)<\infty$ for any
$b\in\Sa$.
\end{proposition}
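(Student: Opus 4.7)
The plan is to combine the pseudo-face decomposition \eqref{Sol_decom} with a semi-algebraic Sard-type argument powered by Lemma \ref{rank}. Since the index family $2^{[p]}$ is finite, it suffices to produce, for every $\alpha\subset[p]$, a semi-algebraic set $B_\alpha\subset\R^{\eta}$ of dimension strictly less than $\eta$ such that $\Psi_C(b)\cap C_\alpha$ is finite whenever $b\notin B_\alpha$. Setting $\Sa:=\R^{\eta}\setminus\bigcup_\alpha B_\alpha$ will then yield $|\Psi_C(b)|\le\sum_\alpha |\Psi_C(b)\cap C_\alpha|<\infty$ for every $b\in\Sa$, and the genericity of $\Sa$ will follow from $\dim(\R^{\eta}\setminus\Sa)<\eta$ via the criterion recalled just before Lemma \ref{rank}.

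Fix $\alpha\subset[p]$. Let $A_\alpha$ denote the submatrix of $A$ whose rows are indexed by $\alpha$, and put $r_\alpha:=\rank(A_\alpha)$ and $L_\alpha:=\ker(A_\alpha)=(\mathrm{row}\,A_\alpha)^{\perp}$. By construction the pseudo-face $C_\alpha$ is an open subset of $L_\alpha$, of dimension $n-r_\alpha$. Complementary slackness in \eqref{KKT_p} forces $\lambda_i=0$ for $i\notin\alpha$ at any $x\in C_\alpha$, so every $x\in\Psi_C(b)\cap C_\alpha$ satisfies $\nabla_x(b^T X_d(x))\in\mathrm{row}(A_\alpha)$, equivalently $P_{L_\alpha}[\nabla_x(b^T X_d(x))]=0$, where $P_{L_\alpha}$ is the orthogonal projection onto $L_\alpha$. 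Note that $0\in C_\alpha$ only when $\alpha=[p]$; in that case the full-rank hypothesis on $A$ makes $C_{[p]}=\ker A$ at most a single point (when $p\ge n$), and otherwise the origin contributes just one isolated element that is harmless for finiteness.

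Introduce the semi-algebraic $C^{\infty}$ map
$$F_\alpha:\R^{\eta}\times\bigl(C_\alpha\setminus\{0\}\bigr)\to L_\alpha,\qquad F_\alpha(b,x):=P_{L_\alpha}\bigl[\nabla_x(b^T X_d(x))\bigr].$$
By Lemma \ref{rank}, the Jacobian $D_b[\nabla_x(b^T X_d(x))]$ is surjective onto $\R^n$ at every $x\ne 0$; composing with the surjective linear map $P_{L_\alpha}$ gives that $D_b F_\alpha$ is surjective onto $L_\alpha$, so $F_\alpha$ is a submersion. Consequently $W_\alpha:=F_\alpha^{-1}(0)$ is a semi-algebraic $C^{\infty}$-submanifold of dimension
$$\eta+(n-r_\alpha)-(n-r_\alpha)=\eta.$$
Applying the semi-algebraic Sard theorem to the projection $\pi_\alpha:W_\alpha\to\R^{\eta}$, the set $B_\alpha$ of its critical values is semi-algebraic with $\dim B_\alpha<\eta$. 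For $b\notin B_\alpha$ the fiber $\pi_\alpha^{-1}(b)$ is a $0$-dimensional semi-algebraic set, hence finite; since $\Psi_C(b)\cap(C_\alpha\setminus\{0\})\subset\pi_\alpha^{-1}(b)$, the stratum contribution $\Psi_C(b)\cap C_\alpha$ is finite.

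The two steps that I expect to require the most care are, first, confirming cleanly that $F_\alpha$ is a submersion on the product $\R^{\eta}\times(C_\alpha\setminus\{0\})$, which is exactly the role of Lemma \ref{rank} together with the surjectivity of $P_{L_\alpha}$; and second, invoking semi-algebraic Sard in a form that delivers a semi-algebraic exceptional set of dimension strictly less than $\eta$, rather than the merely measure-theoretic statement. Once these are in place, the union $B:=\bigcup_\alpha B_\alpha$ is a finite union of sets of dimension $<\eta$, hence itself has dimension $<\eta$; setting $\Sa:=\R^{\eta}\setminus B$ and recalling the decomposition \eqref{Sol_decom} closes the argument. The full-rank assumption on $A$ intervenes only through controlling the $\alpha=[p]$ pseudo-face, and the polyhedrality of $C$ is what makes the index family over which we take unions finite.
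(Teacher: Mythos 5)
Your argument is correct and follows the same overall strategy as the paper: decompose $\Psi_C(b)$ over the finitely many pseudo-faces $C_\alpha$, use Lemma \ref{rank} to get surjectivity of the $b$-derivative of the gradient, invoke a semi-algebraic Sard-type theorem to obtain a small exceptional set in $\R^{\eta}$, conclude that the good fibers are $0$-dimensional semi-algebraic and hence finite, and treat the origin (which lies only in the pseudo-face $\alpha=[p]$) separately. The one genuine difference is that you eliminate the multipliers: where the paper keeps $\lambda_\alpha$ as variables and studies the augmented map $\Phi_\alpha(b,x,\lambda_\alpha)=\bigl(\nabla(b^TX_d(x))+\sum_{i\in\alpha}\lambda_iA_i,\;A_\alpha x\bigr)$ into $\R^{n+|\alpha|}$ --- needing the full-rank hypothesis on $A$ to make the block Jacobian surjective, and then citing the parametric Sard theorem of \cite{DHP16} --- you project the stationarity condition onto $L_\alpha=\ker A_\alpha$ and apply Sard directly to the projection $W_\alpha\to\R^{\eta}$, which is in effect the proof of that parametric Sard theorem unpacked. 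Your reduction is valid (complementary slackness does kill $\lambda_i$ for $i\notin\alpha$ on $C_\alpha$, and discarding the sign constraints on $\lambda_\alpha$ only enlarges the set whose finiteness you prove), and it buys a cleaner transversality argument resting on Lemma \ref{rank} alone; as you half-observe, it makes the full-rank assumption on $A$ essentially superfluous, since the $\alpha=[p]$ face is already handled by deleting the origin and adding back at most one point. The two points you flag as delicate --- the submersion property of $F_\alpha$ and the semi-algebraic (dimension-theoretic, not merely measure-theoretic) form of Sard --- are exactly the right ones, and both are covered by Lemma \ref{rank} and the tools the paper already cites.
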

\begin{proof} Let $C_{\alpha}$ be a nonempty pseudo-face of $C$ and $0\notin C_{\alpha}$. This implies that
	$X_d(x)$ is nonzero on this pseudo-face.	We consider the function
	$$\Phi_{\alpha}:\R^{\eta}\times C_{\alpha}\times \R_+^{|\alpha|} \to \R^{n+|\alpha|},$$
	which is defined by
	$$\Phi_\alpha(b,x,\lambda_\alpha)=\Big(\nabla(b^TX_d(x))+\displaystyle\sum_{i\in \alpha}\lambda_i A_i, A_\alpha x\Big),$$
	where
	$A_\alpha x=(A_{i_1}x,\dots,A_{i_{|\alpha|}x}), i_j\in\alpha.$ Clearly, $C_{\alpha}$ is smooth and $\Phi_\alpha$ is a
	semi-algebraic function of class $C^{\infty}$.
	The Jacobian matrix
	of $\Phi_\alpha$ is determined as follows
	$$D\Phi_\alpha=\left[ \begin{array}{c|c|c}
	D_{b}[\nabla (b^TX_d(x))] \ \ & \  * \ \ &\ \  A_\alpha^T  \ \  \\  \hline
	O_{|\alpha|\times \eta}&\ \ \ \ A_\alpha \ \ \ \ & \ O_{|\alpha|\times |\alpha|}
	\end{array}\right].$$
From Lemma \ref{rank}, for all $x\in C_\alpha$, the rank of $D_{b}[\nabla
(b^TX_d(x))]$ is $n$. Since the rank of $A_\alpha$ is $|\alpha|$, we conclude
that the rank of the matrix $D\Phi_\alpha$ is $n+|\alpha|$ for all $x\in
C_\alpha$. Therefore,	$0\in \R^{n+|\alpha|+|J|}$ is a regular value of
$\Phi_\alpha$. According to the Sard Theorem with parameter \cite[Theorem
2.4]{DHP16}, there exists a generic semi-algebraic set  $\Sa_{\alpha}\subset
\R^{\eta}$ such that if $b\in \Sa_{\alpha}$ then $0$ is a regular value of the
map
	$$\Phi_{\alpha,b}:C_{\alpha}\times \R^{|\alpha|} \to \R^{n+|\alpha|}, \ \Phi_{\alpha,b}(x,\lambda_\alpha) =\Phi_\alpha(b,x,\lambda_\alpha).$$
We see that $\Omega(\alpha,b):=\Phi^{-1}_{\alpha,b}(0)$ is a semi-algebraic
set. From the Regular Level Set Theorem \cite[Theorem 9.9]{Tu2010}, we can
claim
that if $\Omega(\alpha,b)$ is nonempty then the set is a $0-$dimensional
semi-algebraic set. It
follows that $\Omega(\alpha,b)$ is a finite set. Moreover, from \eqref{KKT_p},
one has
	$\KKT_C(b)\cap C_{\alpha}=\pi(\Omega(\alpha,b)),$
	where $\pi$ is the projection $\R^{n+|\alpha|} \to \R^n$ which is defined by $\pi(x,\lambda_{\alpha}) = x$. Hence, $\KKT_C(b)\cap C_{\alpha}$ is a finite set.

We consider the case that $0\in C_{\alpha}$ and define
$U:=C_{\alpha}\setminus\{0\}$. As is clear, $U$ is semi-algebraic. From
\eqref{KKT_p}, we see that $0\in \KKT_C(b)$. Hence,
	$$\KKT_C(b)\cap C_{\alpha}=\{0\}\cup(\KKT_C(b)\cap U).$$
From the previous argument, $\KKT_C(b)\cap U$ is a finite set. By the decomposition \eqref{Sol_decom}, 	$\KKT_C(b)$ is a finite set.

	Take $\Sa=\cap_{\alpha\subset [p]}\Sa_{\alpha},$
	it follows that $\Sa$ is generic in $\R^{\eta}$ and	$\KKT_C(b)$ has finite points for any $b\in\Sa$. Hence, $\#\KKT_C(b)<\infty$ for all $b$ in $\Sa$. The proof is complete. \qed
\end{proof}

\begin{corollary}\label{cor:gen_1} Assume that $C$ is a polyhedral cone given
by
\eqref{K_0} and the matrix $A$ is full rank. Then there exists a generic set
$\mathcal{G}_d$ in $\Hd_d$ such that $\#\Sol(C,g)<\infty$ for any
$g\in\mathcal{G}_d$.
\end{corollary}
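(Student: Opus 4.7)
The plan is to transport the conclusion of Proposition \ref{generic_1}, which lives in the coordinate space $\R^{\eta}$, across to the polynomial space $\Hd_d$ via the natural linear isomorphism $b\mapsto b^TX_d(x)$, and then use the inclusion $\Sol(C,g)\subset \Psi_C(g)$ coming from the Karush--Kuhn--Tucker conditions to deduce finiteness of the solution set from finiteness of the KKT set.

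More precisely, I would first recall that the correspondence $\Phi:\R^{\eta}\to \Hd_d$, $b\mapsto b^TX_d(x)$, is a linear bijection between finite-dimensional Euclidean spaces, and hence a homeomorphism. By Proposition \ref{generic_1}, there exists a generic semi-algebraic set $\Sa\subset \R^{\eta}$ such that $|\Psi_C(b)|<\infty$ for every $b\in \Sa$. As noted in the paragraph following \eqref{KKT_p}, $\Sol(C,g)\subset \Psi_C(g)$ for every $g\in\Po_d$; in particular this inclusion holds for every homogeneous $g\in\Hd_d$. Therefore, identifying $g=b^TX_d(x)$, the set $\Sol(C,g)$ is contained in the finite set $\Psi_C(b)$ and is itself finite whenever $b\in\Sa$.

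Next, I would set $\mathcal{G}_d:=\Phi(\Sa)\subset \Hd_d$. Since $\Phi$ is a homeomorphism and $\Sa$ is generic in $\R^{\eta}$, the set $\mathcal{G}_d$ is generic in $\Hd_d$, as recalled in the opening of Section~\ref{sec:gen}. Finally, for any $g\in \mathcal{G}_d$, writing $g=b^TX_d(x)$ with $b\in \Sa$ yields $|\Sol(C,g)|\leq |\Psi_C(b)|<\infty$, which is the claimed assertion.

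There is essentially no obstacle here: all the nontrivial work (the transversality/Sard argument, the pseudo-face decomposition, and the full-rank hypothesis on $A$) has already been absorbed into Proposition \ref{generic_1}. The only points to be careful about are that the map $\Phi$ is genuinely a homeomorphism (which is immediate from linearity in finite dimension) and that the inclusion $\Sol(C,g)\subset \Psi_C(g)$ is valid on the whole of $\Hd_d$, for which it is enough to observe that the constraint system $Ax\geq 0$ satisfies a linear constraint qualification so that every minimizer is a KKT point.
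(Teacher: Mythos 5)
Your proposal is correct and follows essentially the same route as the paper: both transport the generic set $\Sa$ from Proposition \ref{generic_1} to $\Hd_d$ via the linear isomorphism $b\mapsto b^TX_d(x)$ (a homeomorphism, so genericity is preserved) and conclude via the inclusion $\Sol(C,g)\subset\Psi_C(g)$. Your extra remark justifying that inclusion through the linear constraint qualification for the system $Ax\geq 0$ is a welcome bit of added care, but it is not a departure from the paper's argument.
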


\begin{proof}
	Since $\R^{\eta}$ and $\Hd_d$ are homeomorphic, with the isomorphism $\Pi: \R^{\eta} \to \Hd_d$ defined by $\Pi(b)=b^TX_d(x)$. According to Proposition \ref{generic_1}, there exists a generic set $\Sa\subset \R^{\eta}$ such that the Karush-Kuhn-Tucker set $\KKT(C,b)$ is finite, for any $b\in\Sa$. Clearly, $\mathcal{G}_d:=\Pi(\Sa)$ is generic in $\Hd_d$. Since $\Sol(C,b^TX_d(x))\subset \KKT(C,b)$,  one has $\#\Sol(C,g)<\infty$, for any $g\in\mathcal{G}_d$.  \qed
\end{proof}

\begin{remark}\label{Bel_Kla} If the constraint $K$ is represented by
	\begin{equation}\label{K_1}
	K=\left\lbrace x \in \R^n : q_1(x)\leq 0,\dots,q_m(x)\leq 0\right\rbrace,
	\end{equation}
	where $q_1,\dots,q_m$ are convex polynomials, then the recession cone of $K$ is a nonempty polyhedral cone. We denote
	$$K^j=\left\lbrace x \in \R^n : q_j(x)\leq 0\right\rbrace, \ j\in [m].$$
	For each $j\in [m]$, $K^j$ is closed convex set, and $K_{\infty}^j$ is polyhedral (see \cite[p.39]{BeKla2002}). Since $K=K^1\cap \dots\cap K^m$, according to \cite[Proposition 2.1.9]{AuTe2003}, one has
	$$K_{\infty}=\bigcap_{j\in[m]}K_{\infty}^j.$$
	If follows that $K_{\infty}$ is a nonempty polyhedral cone. Hence, there
	exists a matrix $A\in\R^{p\times n}$ such that \begin{equation}\label{KAx}
	K_{\infty}=\{x\in\R^n: Ax\geq 0\}.
	\end{equation}
\end{remark}

\begin{theorem}
Assume that $K$ be represented by \eqref{K_1} and the cone $K_{\infty}$ represented by \eqref{KAx}, where $A$ is full rank. Then, the set $\Ri_{d}$ is  generic in $\R_{d}[x]$.
\end{theorem}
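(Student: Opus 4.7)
The plan is to lift the genericity statement from the space $\Hd_d$ of homogeneous degree-$d$ polynomials to the ambient space $\Po_d$ by pulling back along the linear projection $\pi:\Po_d\to\Hd_d$ that extracts the leading homogeneous component, namely $\pi(g)=g_d$. The starting point is Corollary \ref{cor:gen_1}: by Remark \ref{Bel_Kla} and the standing hypothesis that the matrix $A$ representing $K_{\infty}$ is of full rank, all assumptions of that corollary are met with $C=K_{\infty}$. This yields a generic semi-algebraic set $\mathcal{G}_d\subset\Hd_d$ such that $|\Sol(K_{\infty},h)|<\infty$ for every $h\in\mathcal{G}_d$.

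I would next upgrade ``finite'' to ``bounded.'' By Remark \ref{sol_cone}, for any $h\in\Hd_d$ the set $\Sol(K_{\infty},h)$ is either empty or a closed cone containing $0$; a closed cone with only finitely many points must equal $\{0\}$ or be empty, so it is automatically bounded. Intersecting with the open dense set $\Hd_d\setminus\{0\}$ yields a still-generic subset $\mathcal{G}_d^{*}:=\mathcal{G}_d\cap(\Hd_d\setminus\{0\})$ whose members are nonzero homogeneous polynomials of exact degree $d$ for which $\Sol(K_{\infty},h)$ is bounded. This condition on $h$ translates directly into regularity of $\OP(K_{\infty},h)$, and more importantly, via $h=g_d$, into regularity of $\OP(K,g)$ for any $g\in\Po_d$ with $g_d=h$.

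The main step is transferring this to $\Po_d$. In the monomial bases of the paper, $\pi$ is a coordinate projection $\R^{\rho}\to\R^{\eta}$ (coefficients of degree-$d$ monomials), hence continuous, surjective and open. The inclusion $\pi^{-1}(\mathcal{G}_d^{*})\subset\Ri_K$ is immediate: if $g_d=\pi(g)\in\mathcal{G}_d^{*}$, then $g_d\neq 0$ forces $\deg g=d$, and $\Sol(K_{\infty},g_d)$ is bounded, so $g\in\Ri_K$. To see that $\pi^{-1}(\mathcal{G}_d^{*})$ is generic in $\Po_d$, write $\mathcal{G}_d^{*}\supset\bigcap_k U_k$ with each $U_k$ open and dense in $\Hd_d$. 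Each $\pi^{-1}(U_k)$ is open in $\Po_d$ by continuity, and it is dense: for arbitrary $g=g_d+g'\in\Po_d$ with $g'\in\Po_{d-1}$, choosing $h_k\in U_k$ with $h_k\to g_d$ in $\Hd_d$ gives $h_k+g'\in\pi^{-1}(U_k)$ with $\|h_k+g'-g\|=\|h_k-g_d\|\to 0$ (the $\ell_2$ coefficient norm restricts compatibly). Thus $\pi^{-1}(\mathcal{G}_d^{*})\supset\bigcap_k\pi^{-1}(U_k)$ is a countable intersection of open dense subsets of $\Po_d$, hence generic, and therefore $\Ri_K\supset\pi^{-1}(\mathcal{G}_d^{*})$ is generic in $\Po_d$.

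The only non-routine point is the density assertion for $\pi^{-1}(U_k)$, which relies on the natural continuous section $\Hd_d\hookrightarrow\Po_d$ and the compatibility of the two $\ell_2$ coefficient norms; the rest is a bookkeeping argument combining Corollary \ref{cor:gen_1} with Remarks \ref{sol_cone} and \ref{Bel_Kla}.
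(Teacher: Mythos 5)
Your proof is correct and follows essentially the same route as the paper: apply Corollary \ref{cor:gen_1} to $C=K_{\infty}$ and transfer genericity from $\Hd_d$ to $\Po_d$ via the splitting $\Po_d=\Hd_d\oplus\Po_{d-1}$ (your $\pi^{-1}(\mathcal{G}_d^{*})$ is the paper's $\mathcal{G}_d\oplus\Po_{d-1}$, up to your removal of $0$). You merely make explicit two points the paper leaves as ``easy to check'' --- that a finite closed cone is $\{0\}$, hence bounded, and that preimages of open dense sets under the leading-coefficient projection are open and dense --- and your intersection with $\Hd_d\setminus\{0\}$ correctly handles the degree-exactly-$d$ requirement in the definition of $\Ri_K$, which the paper's version glosses over.
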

\begin{proof} From Remark \ref{Bel_Kla}, the recession cone $K_{\infty}$ is a nonempty polyhedral cone, where $K_{\infty}=\{x\in\R^n: Ax\geq 0\}$. According to Corollary \ref{cor:gen_1}, there exists a generic set $\mathcal{G}_d$ in $\Hd_d$ such that $\#\Sol(K_{\infty},g)<\infty$ for any $g\in\mathcal{G}_d$. Because of the direct sum $\R_{d}[x]=\Hd_d\oplus\R_{d-1}[x]$, the set $\mathcal{G}_d\oplus\R_{d-1}[x]$ is  generic in $\R_{d}[x]$. It is easy to check that $\mathcal{G}_d\oplus\R_{d-1}[x]\subset \Ri_{d}$. Hence, $\Ri_{d}$ is generic in $\R_{d}[x]$. \qed
\end{proof}

\begin{example} Consider the problem $\OP(K,f)$ given in Example \ref{ex1}, we see that $$\Ri_{2}=\{a_2x^2+a_1x+a_0:a_2\neq 0,a_1\in\R,a_0\in\R\}$$
	is open and dense in $\R_{2}[x]$.
\end{example}

\section*{Perspectives}
The regularity condition enables us to investigate the stability of the optimal
value function of polynomial optimization problems. Furthermore, the regularity
 condition is useful to study the connectedness of the solution sets of convex
 polynomial vector optimization
problems.

\begin{acknowledgements}
	The author would like to thank the anonymous referees for their corrections and comments. This work has been supported by European Union's Horizon 2020 research and innovation programme under the Marie Sk\l{}odowska-Curie Actions, grant agreement 813211 (POEMA).
\end{acknowledgements}

\end{document}